\newtheorem{theorem}{Theorem}[section]
\newtheorem{definition}{Definition}[section]
\numberwithin{equation}{section}
\newcommand{\arccot}
{\textrm{\rm arccot}}
\newcommand{\arctanh}
{\textrm{\rm arctanh}}
\newcommand{\arccoth}
{\textrm{\rm arccoth}}
\newcommand{\sg}
{\textrm{\rm sg}}
\newcommand{\re}
{\textrm{\rm Re}}
\newcommand{\im}
{\textrm{\rm Im}}
\title{Higher Derivatives of the Tangent and Inverse Tangent Functions and Chebyshev Polynomials}
\author{M.J. Kronenburg}
\date{}
\begin{document}

\maketitle

\begin{abstract}
The higher derivatives of the tangent and hyperbolic tangent functions are determined.
Formulas for the higher derivatives of the inverse tangent and inverse hyperbolic tangent
functions as polynomials are stated and proved. 
Using another formula for the higher derivatives of the inverse tangent function from literature,
two known formulas for the Chebyshev polynomials of the first and second kind are proved.
From these formulas the higher derivatives of the inverse tangent
and inverse hyperbolic tangent functions in terms of the Chebyshev polynomial of the second kind are provided.
\end{abstract}

\noindent
\textbf{Keywords}: higher derivatives, inverse tangent function, chebyshev polynomials.\\
\textbf{MSC 2010}: 33B10, 33C45.

\section{Higher Derivatives of the Tangent and\\ Hyperbolic Tangent Functions}

The higher derivatives of the tangent and hyperbolic tangent functions
are computed in the following way \cite{KB67}. The first derivative of the tangent function is:
\begin{equation}
 D_x \tan(x) = D_x \frac{\sin(x)}{\cos(x)} = \frac{\cos^2(x)+\sin^2(x)}{\cos^2(x)} = 1 + \tan^2(x)
\end{equation}
By repeated application of this formula polynomials in $\tan(x)$ are obtained \cite{KB67}:
\begin{equation}
 D_x \tan^k(x) = k \tan^{k-1}(x) ( 1 + \tan^2(x) ) = k ( \tan^{k-1}(x) + \tan^{k+1}(x) )
\end{equation}
From this it is clear that the coefficients $T_{n,k}$ in the polynomial in $\tan(x)$:
\begin{equation}
 D_x^n \tan(x) = \sum_{k=0}^{n+1} T_{n,k} \tan^k(x)
\end{equation}
have the recursion relation \cite{KB67}:
\begin{equation}
 T_{n,k} = (k-1) T_{n-1,k-1} + (k+1) T_{n-1,k+1}
\end{equation}
with boundery conditions $T_{0,k}=\delta_{k,1}$ and $T_{n,-1}=0$.
Special cases are $T_{n,n+1}=n!$ and the $T_{2n+1,0}$ are the tangent numbers \cite{KB67}:
\begin{equation}
 \tan(x) = \sum_{k=0}^{\infty} \frac{T_{2k+1,0}}{(2k+1)!} x^{2k+1}
\end{equation}
For these coefficients $T_{n,k}=0$ when $n-k+1$ is odd, and therefore $k$ can be replaced
with $n-2k+1$, where $n-2k+1=0$ is reached when $k=(n+1)/2$:\\
For integer $n\geq 0$:
\begin{equation}
 D_x^n \tan(x) = \sum_{k=0}^{\lfloor\frac{n+1}{2}\rfloor} T_{n,n-2k+1} \tan^{n-2k+1}(x)
\end{equation}
The same reasoning can be applied to the following functions:
\begin{equation}
 D_x \cot(x) = D_x \frac{\cos(x)}{\sin(x)} = \frac{-\sin^2(x)-\cos^2(x)}{\sin^2(x)} = - ( 1 + \cot^2(x) )
\end{equation}
For integer $n\geq 0$:
\begin{equation}
 D_x^n \cot(x) = (-1)^n \sum_{k=0}^{\lfloor\frac{n+1}{2}\rfloor} T_{n,n-2k+1} \cot^{n-2k+1}(x)
\end{equation}
\begin{equation}
 D_x \tanh(x) = D_x \frac{\sinh(x)}{\cosh(x)} = \frac{\cosh^2(x)-\sinh^2(x)}{\cosh^2(x)} = 1 - \tanh^2(x)
\end{equation}
For integer $n\geq 0$:
\begin{equation}
 D_x^n \tanh(x) = (-1)^n \sum_{k=0}^{\lfloor\frac{n+1}{2}\rfloor} (-1)^k T_{n,n-2k+1} \tanh^{n-2k+1}(x)
\end{equation}
\begin{equation}
 D_x \coth(x) = D_x \frac{\cosh(x)}{\sinh(x)} = \frac{\sinh^2(x)-\cosh^2(x)}{\sinh^2(x)} = 1 - \coth^2(x)
\end{equation}
For integer $n\geq 0$:
\begin{equation}
 D_x^n \coth(x) = (-1)^n \sum_{k=0}^{\lfloor\frac{n+1}{2}\rfloor} (-1)^k T_{n,n-2k+1} \coth^{n-2k+1}(x)
\end{equation}
The corresponding Mathematica\textsuperscript{\textregistered} \cite{W03} program:
\begin{alltt}
$RecursionLimit=Infinity;
T[0,k_]=KroneckerDelta[k,1];
T[n_,-1]=0;
T[n_,k_]:=T[n,k]=If[k>n+1,0,(k-1)T[n-1,k-1]+(k+1)T[n-1,k+1]]
DTan[n_]:=Sum[T[n,n-2k+1]Tan[x]^(n-2k+1),\{k,0,Floor[(n+1)/2]\}]
DCot[n_]:=(-1)^n Sum[T[n,n-2k+1]Cot[x]^(n-2k+1),\{k,0,Floor[(n+1)/2]\}]
DTanh[n_]:=(-1)^n Sum[(-1)^k T[n,n-2k+1]Tanh[x]^(n-2k+1),
 \{k,0,Floor[(n+1)/2]\}]
DCoth[n_]:=(-1)^n Sum[(-1)^k T[n,n-2k+1]Coth[x]^(n-2k+1),
 \{k,0,Floor[(n+1)/2]\}]
\end{alltt}

\section{Higher Derivatives of the Inverse Tangent and\\ Inverse Hyperbolic Tangent Functions}

\begin{theorem}\label{tander}
For integer $n\geq 1$:
\begin{equation}
 D_x^n \arctan(x) = \frac{(-1)^{n+1}(n-1)!}{(1+x^2)^n} \sum_{k=0}^{\lfloor\frac{n-1}{2}\rfloor} \binom{n}{2k+1}(-1)^k x^{n-2k-1}
\end{equation}
\end{theorem}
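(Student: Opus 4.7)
The plan is to reduce the $n$-th derivative of $\arctan(x)$ to the $(n-1)$-th derivative of $1/(1+x^2)$ and evaluate the latter by complex partial fractions, which I expect to be cleaner than a direct induction on $n$. First I would factor $1+x^2=(x-i)(x+i)$ and decompose
\[
\frac{1}{1+x^2}=\frac{1}{2i}\left(\frac{1}{x-i}-\frac{1}{x+i}\right),
\]
so that the elementary identity $D_x^{n-1}(x\pm i)^{-1}=(-1)^{n-1}(n-1)!(x\pm i)^{-n}$ can be applied termwise.

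Combining the two resulting derivatives over the common denominator $(1+x^2)^n=(x-i)^n(x+i)^n$ yields
\[
D_x^n\arctan(x)=\frac{(-1)^{n-1}(n-1)!}{(1+x^2)^n}\cdot\frac{(x+i)^n-(x-i)^n}{2i}.
\]
The remaining step is to recognize $((x+i)^n-(x-i)^n)/(2i)$ as $\im(x+i)^n$ and expand via the binomial theorem. Since $i^{2k+1}=(-1)^k i$ and the even-power terms contribute nothing to the imaginary part, this expansion produces precisely $\sum_{k=0}^{\lfloor(n-1)/2\rfloor}\binom{n}{2k+1}(-1)^k x^{n-2k-1}$. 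Together with the identity $(-1)^{n-1}=(-1)^{n+1}$, this recovers the stated formula.

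I do not anticipate a significant obstacle: the argument is algebraic bookkeeping, and the only item requiring care is the sign pattern of $i^{2k+1}$ in the binomial expansion of $(x+i)^n$. An alternative approach by induction on $n$, differentiating the right-hand side as a quotient and matching the resulting numerator with the case $n+1$, would also succeed but forces one to track two simultaneous index shifts (from differentiating the polynomial factor and from differentiating the $(1+x^2)^n$ factor) and is noticeably less clean than the complex-variable route.
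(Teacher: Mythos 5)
Your proposal is correct and takes essentially the same route as the paper: the paper arrives at the identical intermediate expression $\frac{(-1)^{n-1}(n-1)!}{2i}\bigl[(x-i)^{-n}-(x+i)^{-n}\bigr]$ by differentiating the logarithmic representation $\arctan(x)=\frac{i}{2}[\ln(i+x)-\ln(i-x)]$ instead of partial-fractioning $1/(1+x^2)$, a purely cosmetic difference. From there both arguments perform the same binomial expansion of $(x+i)^n-(x-i)^n$, keep only the odd-index terms, and use $i^{2k+1}=(-1)^k i$, so there is no gap.
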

\begin{proof}
The following is a definition of the $\arctan(x)$ function \cite{AS72}:
\begin{equation}
\begin{split}
 \arctan(x) & = - \frac{i}{2} \ln( \frac{1+ix}{1-ix} ) \\
 & = - \frac{i}{2} \ln( \frac{i-x}{i+x} ) \\
 & = \frac{i}{2} [ \ln(i+x) - \ln(i-x) ] \\
\end{split}
\end{equation}
The derivatives of the $\ln(x)$ function are:
\begin{equation}\label{lnder}
 D_x^n \ln(x) = (-1)^{n+1} (n-1)! x^{-n}
\end{equation}
so the derivatives of the $\arctan(x)$ function are:
\begin{equation}\label{derarctan}
\begin{split}
 D_x^n \arctan(x) & = (-1)^{n+1} (n-1)! \frac{i}{2} [ \frac{1}{(i+x)^n} - (-1)^n \frac{1}{(i-x)^n} ] \\
 & = (-1)^n (n-1)! \frac{i}{2} [ \frac{1}{(x-i)^n} - \frac{1}{(x+i)^n} ] \\
\end{split}
\end{equation}
The complex expression in this equation can be evaluated with the binomial theorem:
\begin{equation}
\begin{split}
 & \frac{i}{2} [ \frac{1}{(x-i)^n} - \frac{1}{(x+i)^n} ] \\
 = & \frac{i}{2} \frac{(x+i)^n - (x-i)^n}{(1+x^2)^n} \\
 = & \frac{1}{(1+x^2)^n} \frac{i}{2} [ \sum_{k=0}^n \binom{n}{k} i^k x^{n-k} - \sum_{k=0}^n \binom{n}{k} (-1)^k i^k x^{n-k} ] \\
 = & \frac{1}{(1+x^2)^n} i \sum_{k=0}^n \binom{n}{k} \frac{1}{2}(1-(-1)^k) i^k x^{n-k} \\
\end{split}
\end{equation}
The summand in this expression is only nonzero when $k$ is odd:
\begin{equation}
 \frac{1}{2} ( 1 - (-1)^k ) = 
 \begin{cases}
   1 & \text{if $k$ is odd} \\
   0 & \text{if $k$ is even} \\
 \end{cases}
\end{equation}
Therefore $k$ can be replaced by $2k+1$, where the upper limit is reached when $2k+1=n$,
which is when $k=(n-1)/2$, which results in:
\begin{equation}
\begin{split}
  & \frac{1}{(1+x^2)^n} i \sum_{k=0}^{\lfloor\frac{n-1}{2}\rfloor} \binom{n}{2k+1} i^{2k+1} x^{n-2k-1} \\
 = & \frac{-1}{(1+x^2)^n} \sum_{k=0}^{\lfloor\frac{n-1}{2}\rfloor} \binom{n}{2k+1} (-1)^k x^{n-2k-1} \\
\end{split}
\end{equation}
and the theorem is proved.
\end{proof}
\begin{theorem}
For integer $n\geq 1$:
\begin{equation}\label{arctanhres}
 D_x^n \arctanh(x) = \frac{(n-1)!}{(1-x^2)^n} \sum_{k=0}^{\lfloor\frac{n-1}{2}\rfloor} \binom{n}{2k+1} x^{n-2k-1}
\end{equation}
\end{theorem}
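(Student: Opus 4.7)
The plan is to mimic the proof of Theorem~\ref{tander}, replacing the complex logarithm representation of $\arctan$ by the real logarithm representation of $\arctanh$. I would begin from the identity
\[
 \arctanh(x) = \tfrac{1}{2}[\ln(1+x) - \ln(1-x)].
\]
Using $D_x^n \ln(1+x) = (-1)^{n+1}(n-1)!(1+x)^{-n}$ together with the chain rule to obtain $D_x^n \ln(1-x) = -(n-1)!(1-x)^{-n}$, this immediately yields
\[
 D_x^n \arctanh(x) = \frac{(n-1)!}{2(1-x^2)^n}\bigl[(1+x)^n + (-1)^{n+1}(1-x)^n\bigr].
\]

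The next step is to expand $(1+x)^n$ and $(1-x)^n$ by the binomial theorem. The combination $(1+x)^n + (-1)^{n+1}(1-x)^n$ annihilates every term $\binom{n}{j}x^j$ for which $n+j$ is even and doubles the rest, so only indices $j$ of parity opposite to that of $n$ contribute.

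The main obstacle is purely combinatorial: the surviving indices are even when $n$ is odd and odd when $n$ is even, whereas the theorem wants the sum written as $\sum_{k} \binom{n}{2k+1} x^{n-2k-1}$ with the binomial index always odd. I would resolve this by performing the re-indexing $j \mapsto n-j$ together with the symmetry $\binom{n}{j} = \binom{n}{n-j}$, which converts the surviving terms into $\binom{n}{2k+1}x^{n-2k-1}$ uniformly in the parity of $n$. Absorbing the factor of $2$ into the prefactor then gives exactly the claimed formula.

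As a sanity check, the same result should follow from Theorem~\ref{tander} via the identity $\arctanh(x) = -i\arctan(ix)$: applying the chain rule and using $(ix)^2 = -x^2$, the various powers of $i$ collapse and the alternating sign $(-1)^k$ that appears in Theorem~\ref{tander} disappears, leaving the sign-free sum of the present theorem.
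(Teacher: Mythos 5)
Your proposal is correct and follows essentially the same route as the paper: the paper likewise splits $\arctanh(x)=\tfrac12[\ln(1+x)-\ln(1-x)]$, applies the formula for $D_x^n\ln$, and repeats the binomial-expansion argument of Theorem~\ref{tander}. The only cosmetic difference is that you expand $(1\pm x)^n$ in ascending powers of $x$ and then need the re-indexing $j\mapsto n-j$ with $\binom{n}{j}=\binom{n}{n-j}$, whereas expanding $(x+1)^n-(x-1)^n$ in the form $\sum_k\binom{n}{k}(\pm1)^kx^{n-k}$ (as in the paper's $\arctan$ proof) makes the odd indices survive directly.
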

\begin{proof}
A similar derivation using \cite{AS72}:
\begin{equation}
 \arctanh(x) = \frac{1}{2} \ln(\frac{1+x}{1-x})
\end{equation}
gives:
\begin{equation}
 D_x^n \arctanh(x) = (-1)^n (n-1)! \frac{1}{2} [ \frac{1}{(x-1)^n} - \frac{1}{(x+1)^n} ]
\end{equation}
A similar derivation as in the previous theorem gives this theorem.
\end{proof}
\begin{theorem}\label{dtancot}
For integer $n\geq 1$:
\begin{equation}
 D_x^n \arccot(x) = - D_x^n \arctan(x)
\end{equation}
\end{theorem}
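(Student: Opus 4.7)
The plan is to exploit the well-known identity $\arctan(x)+\arccot(x)=\pi/2$ (valid for all real $x$ under the standard branch $\arccot\colon\mathbb{R}\to(0,\pi)$), which reduces the theorem to a one-line differentiation argument. Specifically, I would first recall or briefly verify this identity, noting that both sides are continuous and that their derivatives agree (each equals $1/(1+x^2)$ for $\arctan$ and $-1/(1+x^2)$ for $\arccot$), with the constant of integration fixed by evaluation at $x=0$.

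Then I would apply $D_x^n$ to both sides of $\arctan(x)+\arccot(x)=\pi/2$. Since $n\geq 1$, the right-hand side vanishes, giving
\begin{equation}
 D_x^n\arctan(x)+D_x^n\arccot(x)=0,
\end{equation}
which is the claim after rearrangement. The explicit formula from Theorem \ref{tander} may then be substituted to write $D_x^n\arccot(x)$ in closed form, although that is not required to establish the statement itself.

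I do not anticipate any genuine obstacle here; the only subtlety is the branch choice for $\arccot$. If one prefers the alternative convention $\arccot(x)=\arctan(1/x)$ (valid only for $x>0$), the identity picks up a sign discontinuity at $x=0$, which would complicate the argument on the negative real axis. Thus I would explicitly adopt the principal branch mapping to $(0,\pi)$ so that the identity holds globally on $\mathbb{R}$, after which the proof is immediate.
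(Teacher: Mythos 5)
Your proof is correct, and it rests on the same underlying idea as the paper's --- namely that $\arctan(x)+\arccot(x)$ is locally constant, so all of its higher derivatives vanish --- but you reach that point by a cleaner route. The paper does not use the $(0,\pi)$ branch: it defines $\arccot(x)=\arctan(1/x)$, derives the sum identity from the complex-logarithm representations of both functions, and obtains $\arccot(x)+\arctan(x)=\frac{\pi}{2}\,\sg(x)$, which is only piecewise constant with a jump at $x=0$. As a result the paper must separately argue about the point $x=0$, and in fact it does so only by \emph{assuming} that the derivatives of $\arccot$ are continuous there. You identified precisely this subtlety in your last paragraph: by adopting the principal branch mapping onto $(0,\pi)$, the identity $\arctan(x)+\arccot(x)=\pi/2$ holds globally on $\mathbb{R}$, the right-hand side is genuinely constant, and the result follows everywhere (including at the origin) with no patching. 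Since the two conventions for $\arccot$ differ by a locally constant function, they have identical derivatives of every order $n\geq 1$ away from $x=0$, so your conclusion is equivalent to the paper's wherever both are defined; your version is arguably the more complete proof of the stated equality at $x=0$. The only thing to keep in mind is that elsewhere the paper commits to the $\arctan(1/x)$ convention (its section on $\sg(x)$ for complex arguments depends on the $\frac{\pi}{2}\sg(x)$ form of the identity), so if your proof were spliced into this paper you would want to note explicitly which convention you are using and why the theorem is insensitive to that choice.
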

\begin{proof}
\begin{equation}
 \arctan(x) = - \frac{i}{2} \ln( \frac{1+ix}{1-ix} )
\end{equation}
\begin{equation}
 \arccot(x) = \arctan(\frac{1}{x}) = - \frac{i}{2} \ln( \frac{x+i}{x-i} ) 
 = \frac{i}{2} [ \ln(x-i) - \ln(x+i) ]
\end{equation}
Using identity (\ref{lnder}) from theorem \ref{tander}:
\begin{equation}
 D_x^n \arccot(x) = (-1)^{n+1} (n-1)! \frac{i}{2} [ \frac{1}{(x-i)^n} - \frac{1}{(x+i)^n} ]
\end{equation}
Comparing this with (\ref{derarctan}) from theorem \ref{tander}, this theorem is proved.
\end{proof}
\begin{theorem}
For integer $n\geq 1$:
\begin{equation}
 D_x^n \arccoth(x) = D_x^n \arctanh(x)
\end{equation}
\end{theorem}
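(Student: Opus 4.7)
The plan is to mirror the strategy used for Theorem \ref{dtancot}. First, express both functions in logarithmic form: $\arctanh(x) = \frac{1}{2}\ln\bigl(\frac{1+x}{1-x}\bigr)$, and, via $\arccoth(x) = \arctanh(1/x)$, $\arccoth(x) = \frac{1}{2}\ln\bigl(\frac{x+1}{x-1}\bigr)$. Then form the difference and show that, unlike the $\arctan/\arccot$ situation, it collapses to a genuine constant (no $\sg$ bookkeeping is needed).

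Combining the two logarithms gives
\[
 \arccoth(x) - \arctanh(x) = \tfrac{1}{2}\ln\!\Bigl(\tfrac{(x+1)(1-x)}{(x-1)(1+x)}\Bigr) = \tfrac{1}{2}\ln(-1),
\]
which is a branch-dependent constant (e.g.\ $i\pi/2$ under the principal branch). Differentiating $n\geq 1$ times annihilates this constant, giving $D_x^n \arccoth(x) = D_x^n \arctanh(x)$ as rational functions in $x$, which is precisely the claim and, together with Theorem 2.2, yields an explicit formula for $D_x^n \arccoth(x)$.

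The main obstacle is interpretive rather than computational: on the real line the natural domains of $\arctanh$ (namely $|x|<1$) and $\arccoth$ (namely $|x|>1$) are disjoint, so the subtraction above should be read either in the complex-logarithm sense or as an identity of rational functions obtained after analytic continuation. A cleaner alternative which sidesteps this issue is to verify directly that $D_x\arccoth(x) = 1/(1-x^2) = D_x\arctanh(x)$ on each component where the respective function is defined, and then apply $D_x^{n-1}$ to both sides. Either route is noticeably simpler than the corresponding $\arctan/\arccot$ argument, precisely because the logarithmic difference here is a single constant rather than a piecewise constant requiring $\sg(x)$.
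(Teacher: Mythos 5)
Your proof is correct, but it takes a genuinely different route from the paper's. The paper deduces the result from Theorem \ref{dtancot} by passing to imaginary arguments: it writes $\arctanh(x)=-i\arctan(ix)$ and $\arccoth(x)=\arctanh(1/x)=i\,\arccot(ix)$, so that $D_x^n\arccoth(x)=i\,D_x^n\arccot(ix)=-i\,D_x^n\arctan(ix)=D_x^n\arctanh(x)$. You instead work directly with the logarithmic difference (or, in your alternative, with the first derivatives), which is self-contained and more elementary: it needs neither the $\arctan/\arccot$ theorem nor any complex substitution, whereas the paper's route buys a uniform treatment of all four inverse functions from a single earlier result. One caution on your first variant: $\tfrac12\ln(a)-\tfrac12\ln(b)=\tfrac12\ln(a/b)$ holds only modulo $i\pi$ on the principal branch, and the paper itself records in Section 5 that $\arccoth(x)-\arctanh(x)=\tfrac{\pi}{2}i\,\sg(ix)$, a piecewise constant rather than a single constant; so the claim that ``no $\sg$ bookkeeping is needed'' is slightly too strong. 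This does not damage the argument --- a locally constant function still has vanishing derivatives of every order $n\geq 1$ on each component of its domain, and unlike the $\arctan/\arccot$ case the jump locus does not meet the interior of the real domain of $\arccoth$ --- but it should be stated as ``locally constant'' rather than ``a genuine constant.'' Your second variant, checking $D_x\arccoth(x)=1/(1-x^2)=D_x\arctanh(x)$ and applying $D_x^{n-1}$, is the cleanest of the three arguments, avoids the branch issue entirely, and together with (\ref{arctanhres}) yields the explicit formula for $D_x^n\arccoth(x)$.
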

\begin{proof}
\begin{equation}
 \arctanh(x) = \frac{1}{2} \ln( \frac{1+x}{1-x} ) = -i \, \arctan(ix)
\end{equation}
\begin{equation}
 \arccoth(x) = \arctanh(\frac{1}{x}) = \frac{1}{2} \ln( \frac{x+1}{x-1} ) = i \, \arccot(ix)
\end{equation}
Using the previous theorem:
\begin{equation}
\begin{split}
 D_x^n \arccoth(x) & = i D_x^n \arccot(ix) \\
 & = -i D_x^n \arctan(ix) \\
 & = D_x^n \arctanh(x) \\ 
\end{split}
\end{equation}
\end{proof}

\section{Chebyshev Polynomials}

Let the following definition be given:
\begin{definition}\label{sgdef}
For real $x$:
\begin{equation}
 \sg(x) = 
 \begin{cases}
   1 & \text{\rm if $x\geq 0$} \\
   -1 & \text{\rm if $x<0$} \\
 \end{cases}
\end{equation}
\end{definition}
For the definition for complex $x$, see section 5.
\begin{theorem}\label{sinarcsin}
For integer $n\geq 0$:
\begin{equation}
 \sin(n \arcsin(x)) = x \sum_{k=0}^{\lfloor\frac{n-1}{2}\rfloor} \binom{n}{2k+1} (-1)^k x^{2k} (1-x^2)^{\frac{n-1}{2}-k}
\end{equation}
\end{theorem}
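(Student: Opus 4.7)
The plan is to use De Moivre's formula $\cos(n\theta)+i\sin(n\theta)=(\cos\theta+i\sin\theta)^n$ together with the substitution $x=\sin\theta$. Because $\arcsin(x)\in[-\pi/2,\pi/2]$ for $x\in[-1,1]$, the angle $\theta=\arcsin(x)$ satisfies $\cos\theta\geq 0$, so I may set $\cos\theta=\sqrt{1-x^2}$ unambiguously; this is the fact that lets me convert trigonometric expressions to the algebraic ones appearing in the statement.

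First I would expand the right-hand side of De Moivre's identity using the binomial theorem,
\begin{equation*}
 (\cos\theta+i\sin\theta)^n=\sum_{k=0}^n \binom{n}{k} i^k \sin^k(\theta) \cos^{n-k}(\theta),
\end{equation*}
and take the imaginary part. Only odd $k$ contribute, since $i^k$ is imaginary precisely then; writing $k=2j+1$ yields
\begin{equation*}
 \sin(n\theta)=\sum_{j=0}^{\lfloor(n-1)/2\rfloor} \binom{n}{2j+1} (-1)^j \sin^{2j+1}(\theta) \cos^{n-2j-1}(\theta),
\end{equation*}
using $i^{2j+1}=i(-1)^j$.

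Next I would substitute back $\sin\theta=x$, factor one power of $x$ out of $\sin^{2j+1}(\theta)=x\cdot x^{2j}$, and rewrite $\cos^{n-2j-1}(\theta)=(1-x^2)^{(n-1)/2-j}$ (valid because $\cos\theta\geq 0$; note that when $n$ is even the exponent is a half-integer, but this is simply $(\sqrt{1-x^2})^{n-1-2j}$ and causes no ambiguity). This yields exactly the claimed formula.

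The main "obstacle" is really just a careful bookkeeping step: checking that the half-integer powers of $1-x^2$ that arise when $n$ is even are well defined and agree with the trigonometric values, which is secured by the sign of $\cos(\arcsin x)$. Once that is observed, the identity is a direct consequence of De Moivre's theorem and the binomial expansion, and no induction or separate parity argument on $n$ is needed.
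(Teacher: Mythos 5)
Your proof is correct, but it takes a genuinely different route from the paper. You prove the identity directly as the classical multiple-angle formula: expand $(\cos\theta+i\sin\theta)^n$ by the binomial theorem, take imaginary parts via De Moivre, and substitute $\theta=\arcsin(x)$, using $\cos(\arcsin(x))=\sqrt{1-x^2}\geq 0$ to justify the half-integer powers of $1-x^2$. The paper instead obtains the identity by equating two expressions for $D_x^n\arctan(x)$: the polynomial form of Theorem \ref{tander} and the formula (\ref{adegoke}) from the literature involving $\sin(n\arcsin(1/\sqrt{1+x^2}))$; it then performs the substitution $x\mapsto\sqrt{1-x^2}/x$ on $[0,1]$ and handles $-1\leq x<0$ by a separate oddness argument. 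Your approach is more elementary and self-contained (the paper's proof leans on (\ref{adegoke}), which is only proved later in Section 4), and it cleanly avoids both the case split on the sign of $x$ and the bookkeeping with $\sg(x)$. What it does not deliver is the paper's stated goal: the author's point is precisely to derive this known multiple-angle/Chebyshev identity \emph{via} the higher derivatives of the inverse tangent function, so the detour through $\arctan$ is a feature of the exposition rather than a necessity of the mathematics.
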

\begin{proof}
There is another expression of the higher derivatives of the inverse tangent function from literature \cite{AL16,VL10,S50},
which is also proved in the next section:
\begin{equation}\label{adegoke}
 D_x^n \arctan(x) = \frac{(-1)^{n+1}(n-1)!\sg^{n-1}(x)}{(1+x^2)^{n/2}} \sin(n \arcsin(\frac{1}{\sqrt{1+x^2}}))
\end{equation}
where $\sg(x)$ is given by definition \ref{sgdef}.
Equating this identity with theorem \ref{tander}:
\begin{equation}
 \sin( n \arcsin(\frac{1}{\sqrt{1+x^2}}) ) = \frac{\sg^{n-1}(x)}{(1+x^2)^{n/2}} \sum_{k=0}^{\lfloor\frac{n-1}{2}\rfloor} \binom{n}{2k+1} (-1)^k x^{n-2k-1}
\end{equation}
For $0\leq x\leq 1$, replacing $x$ with $\sqrt{1-x^2}/x$ gives the theorem.
For $-1\leq x<0$, because $\sin(n\arcsin(-x))=-\sin(n\arcsin(x))$ the identity remains valid,
and therefore the theorem is proved.
\end{proof}
\begin{theorem}\label{unpoly}
\begin{equation}
 U_n(x) = \sum_{k=0}^{\lfloor\frac{n}{2}\rfloor} \binom{n+1}{2k+1} (x^2-1)^k x^{n-2k}
\end{equation}
\end{theorem}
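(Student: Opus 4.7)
The plan is to specialize Theorem \ref{sinarcsin} via a trigonometric substitution and then match the resulting expression against the standard identity $U_n(\cos\theta)\sin\theta = \sin((n+1)\theta)$ that characterizes the Chebyshev polynomials of the second kind.

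First I would apply Theorem \ref{sinarcsin} with $n$ replaced by $n+1$, obtaining
\begin{equation*}
\sin((n+1)\arcsin(t)) = t \sum_{k=0}^{\lfloor n/2\rfloor} \binom{n+1}{2k+1}(-1)^k t^{2k}(1-t^2)^{n/2-k}.
\end{equation*}
Setting $t = \sin\theta$ with $\theta \in [0,\pi/2]$ and dividing both sides by $t = \sin\theta$, the left-hand side becomes $\sin((n+1)\theta)/\sin\theta = U_n(\cos\theta)$; on the right-hand side, $(1-t^2)^{n/2-k}$ becomes $\cos^{n-2k}\theta$, which is well-defined and non-negative on this range.

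Writing $x = \cos\theta$ so that $\sin^2\theta = 1-x^2$, and therefore $(-1)^k \sin^{2k}\theta = (-1)^k(1-x^2)^k = (x^2-1)^k$, the right-hand side simplifies to exactly $\sum_{k=0}^{\lfloor n/2\rfloor}\binom{n+1}{2k+1}(x^2-1)^k x^{n-2k}$, which establishes the claimed identity for $x \in [0,1]$; since both sides are polynomials in $x$ of degree $n$ agreeing on an interval, they coincide as polynomials. The only bookkeeping point worth flagging is that when $n$ is odd the exponent $n/2-k$ is a half-integer, so $(1-t^2)^{n/2-k}$ must be interpreted as $(\sqrt{1-t^2})^{\,n-2k}$; on $\theta\in[0,\pi/2]$ the principal square root agrees with $\cos\theta \geq 0$, so the substitution is unambiguous and ultimately produces only integer powers of $x$ on the right-hand side. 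Beyond this, there is no serious obstacle---the argument is essentially a change of variable recasting Theorem \ref{sinarcsin} into standard Chebyshev form.
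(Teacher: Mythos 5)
Your proposal is correct and follows essentially the same route as the paper: both apply Theorem \ref{sinarcsin} with $n$ replaced by $n+1$ and convert $\sin((n+1)\arcsin t)/t$ into $U_n(\sqrt{1-t^2})$ via the defining relation of the Chebyshev polynomial of the second kind (your substitution $t=\sin\theta$, $x=\cos\theta$ is the paper's replacement $x\to\sqrt{1-x^2}$ in different notation). The only cosmetic difference is how the identity is extended beyond $0\leq x\leq 1$: the paper checks the parity $U_n(-x)=(-1)^nU_n(x)$ on $[-1,0)$, whereas you invoke the fact that two polynomials agreeing on an interval coincide, which is equally valid (granting the standard fact that $U_n$ is a polynomial).
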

\begin{proof}
The definition of the Chebyshev polynomial of the second kind \cite{B08}:
\begin{equation}\label{undef}
 U_n(x) = \frac{1}{\sqrt{1-x^2}} \sin((n+1)\arccos(x))
\end{equation}
For $0\leq x\leq 1$, replacing $x$ with $\sqrt{1-x^2}$ and using $\arccos(\sqrt{1-x^2})=\arcsin(x)$:
\begin{equation}
 U_n(\sqrt{1-x^2}) = \frac{1}{x} \sin((n+1)\arcsin(x)) 
\end{equation}
and substituting the result of the previous theorem:
\begin{equation}
 U_n(\sqrt{1-x^2}) = \sum_{k=0}^{\lfloor\frac{n}{2}\rfloor} \binom{n+1}{2k+1} (-1)^k x^{2k} (1-x^2)^{\frac{n}{2}-k}
\end{equation}
Replacing $x$ with $\sqrt{1-x^2}$ gives the theorem.
For $-1\leq x<0$, using $\arccos(-x)=\pi-\arccos(x)$ and $\sin(\alpha+n\pi)=(-1)^n\sin(\alpha)$, from the
definition (\ref{undef}) follows:
\begin{equation}
 U_n(-x) = (-1)^n U_n(x)
\end{equation}
This also holds for the right side of the theorem, and therefore the theorem is proved.
\end{proof}
This formula is a known expression for the Chebyshev polynomial of the second kind \cite{W091},
which is now proved via the higher derivatives of the inverse tangent function.
From the formula for the Chebyshev polynomial of the second kind, the formula for the
Chebyshev polynomial of the first kind can be derived.
\begin{theorem}
\begin{equation}
 T_n(x) = \sum_{k=0}^{\lfloor\frac{n}{2}\rfloor} \binom{n}{2k} (x^2-1)^k x^{n-2k}
\end{equation}
\end{theorem}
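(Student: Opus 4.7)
The plan is to derive the formula for $T_n(x)$ from the formula for $U_n(x)$ just proved (Theorem \ref{unpoly}) by using the classical identity
\begin{equation*}
 T_n(x) = U_n(x) - x\, U_{n-1}(x).
\end{equation*}
I would first verify this identity via the standard trigonometric argument: with $\theta=\arccos(x)$ one has $\sin((n+1)\theta)=\sin(n\theta)\cos\theta+\cos(n\theta)\sin\theta$, which upon dividing through by $\sin\theta=\sqrt{1-x^2}$ rearranges to $T_n(x)=U_n(x)-xU_{n-1}(x)$. This is a one-line computation and does not rely on anything beyond the definitions of $T_n$ and $U_n$.

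Next I would substitute the polynomial expression from Theorem \ref{unpoly} into both $U_n$ and $U_{n-1}$. Writing
\begin{equation*}
 U_n(x) = \sum_{k=0}^{\lfloor n/2\rfloor} \binom{n+1}{2k+1}(x^2-1)^k x^{n-2k}, \qquad
 x\,U_{n-1}(x) = \sum_{k=0}^{\lfloor (n-1)/2\rfloor} \binom{n}{2k+1}(x^2-1)^k x^{n-2k},
\end{equation*}
the two sums have matching monomials $(x^2-1)^k x^{n-2k}$, so their difference collapses term by term. Pascal's rule $\binom{n+1}{2k+1}=\binom{n}{2k+1}+\binom{n}{2k}$ then yields coefficient $\binom{n}{2k}$, which is exactly the desired expression.

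The only bookkeeping subtlety is that the two upper limits differ by one when $n$ is even: then $\lfloor n/2\rfloor = n/2$ while $\lfloor (n-1)/2\rfloor = n/2-1$, so the top term $k=n/2$ appears only in $U_n(x)$. This is harmless because for $k=n/2$ we have $\binom{n+1}{n+1}=1=\binom{n}{n}$, agreeing with the convention $\binom{n}{2k+1}=0$ when $2k+1>n$. When $n$ is odd the two upper limits coincide. Hence in either parity the combined sum runs from $k=0$ to $\lfloor n/2\rfloor$ with coefficient $\binom{n}{2k}$, giving the stated formula.

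I do not anticipate any real obstacle here; the main thing to be careful about is the parity check on the summation range, and the need to first justify (not just invoke) the decomposition $T_n=U_n-xU_{n-1}$, since the paper has so far only cited the definitions (\ref{undef}) of the Chebyshev polynomials and the formula for $U_n$ proved above.
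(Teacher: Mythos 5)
Your proposal is correct and follows essentially the same route as the paper: both derive $T_n(x)=U_n(x)-xU_{n-1}(x)$ from the angle-addition formula applied to the definition of $U_n$, then substitute the polynomial form of Theorem \ref{unpoly} and apply Pascal's rule $\binom{n+1}{2k+1}-\binom{n}{2k+1}=\binom{n}{2k}$. Your explicit check of the summation limits in the even case is a small extra care the paper leaves implicit.
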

\begin{proof}
The definition of the Chebyshev polynomial of the second kind \cite{B08}:
\begin{equation}
 U_n(x) = \frac{1}{\sqrt{1-x^2}} \sin( (n+1)\arccos(x) )
\end{equation}
Using the trigonometric identity:
\begin{equation}
 \sin(\alpha+\beta) = \sin(\alpha)\cos(\beta)+\cos(\alpha)\sin(\beta)
\end{equation}
and using $\cos(\arccos(x))=x$ and $\sin(\arccos(x))=\sqrt{1-x^2}$:
\begin{equation}
\begin{split}
 U_n(x) & = \frac{x}{\sqrt{1-x^2}} \sin( n \arccos(x) ) + \cos( n \arccos(x) ) \\
 & = x U_{n-1}(x) + T_n(x) \\
\end{split}
\end{equation}
which results in:
\begin{equation}
 T_n(x) = U_n(x) - x U_{n-1}(x)
\end{equation}
Using this, theorem \ref{unpoly} and:
\begin{equation}
 \binom{n+1}{2k+1} - \binom{n}{2k+1} = \binom{n}{2k}
\end{equation}
gives this theorem.
\end{proof}
\begin{theorem}\label{dtancheb}
For integer $n\geq 1$:
\begin{equation}
 D_x^n \arctan(x) = \frac{(-1)^{n+1}(n-1)!}{(1+x^2)^{\frac{n+1}{2}}} U_{n-1}(\frac{x}{\sqrt{1+x^2}})
\end{equation}
\end{theorem}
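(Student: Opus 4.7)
The plan is to start from Theorem \ref{tander}, which already gives a closed-form polynomial expression for $D_x^n \arctan(x)$, and rewrite that polynomial as a value of $U_{n-1}$ using the explicit formula from Theorem \ref{unpoly}. So this is purely an algebraic matching, with no analytic content beyond what has already been established.

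Concretely, I would first substitute $y = x/\sqrt{1+x^2}$ into Theorem \ref{unpoly} written for index $n-1$:
\begin{equation*}
 U_{n-1}(y) = \sum_{k=0}^{\lfloor (n-1)/2 \rfloor} \binom{n}{2k+1}(y^2-1)^k y^{n-1-2k}.
\end{equation*}
The key observation is that $y^2 - 1 = -1/(1+x^2)$, so $(y^2-1)^k = (-1)^k (1+x^2)^{-k}$, and $y^{n-1-2k} = x^{n-1-2k}(1+x^2)^{-(n-1-2k)/2}$. Combining the exponents gives $k + (n-1-2k)/2 = (n-1)/2$, which is the same for every term in the sum, so the factor $(1+x^2)^{-(n-1)/2}$ pulls out cleanly.

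After that pull-out, I obtain
\begin{equation*}
 U_{n-1}\!\left(\tfrac{x}{\sqrt{1+x^2}}\right) = \frac{1}{(1+x^2)^{(n-1)/2}} \sum_{k=0}^{\lfloor (n-1)/2\rfloor} \binom{n}{2k+1}(-1)^k x^{n-2k-1}.
\end{equation*}
Multiplying both sides by $(-1)^{n+1}(n-1)!/(1+x^2)^{(n+1)/2}$ and noting that $(n+1)/2 + (n-1)/2 = n$, the right-hand side matches the expression for $D_x^n \arctan(x)$ in Theorem \ref{tander}, and the theorem follows.

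There is essentially no obstacle; the only thing to be careful about is the bookkeeping of the exponent of $(1+x^2)$, in particular the cancellation of half-integer powers with the factor $(1+x^2)^{(n+1)/2}$ in front, and ensuring that the upper summation limit $\lfloor (n-1)/2 \rfloor$ coming from $U_{n-1}$ (i.e., $\lfloor (n-1)/2\rfloor$ via $\lfloor n/2 \rfloor$ for $U_n$ with $n \to n-1$) agrees with the one appearing in Theorem \ref{tander}, which it does.
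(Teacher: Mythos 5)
Your proposal is correct and follows exactly the paper's own argument: substitute $x/\sqrt{1+x^2}$ into the explicit polynomial for $U_{n-1}$ from Theorem \ref{unpoly}, use $y^2-1=-1/(1+x^2)$ to pull out the uniform factor $(1+x^2)^{-(n-1)/2}$, and match the result against Theorem \ref{tander}. The only cosmetic difference is that the paper performs the substitution for general index $n$ and then replaces $n$ by $n-1$, whereas you work with $n-1$ from the start.
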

\begin{proof}
The following has been proved above:
\begin{equation}
 U_n(x) = \sum_{k=0}^{\lfloor\frac{n}{2}\rfloor} \binom{n+1}{2k+1} (x^2-1)^k x^{n-2k}
\end{equation}
Replacing $x$ with $x/\sqrt{1+x^2}$ gives:
\begin{equation}
 U_n(\frac{x}{\sqrt{1+x^2}}) = \frac{1}{(1+x^2)^{\frac{n}{2}}} \sum_{k=0}^{\lfloor\frac{n}{2}\rfloor} \binom{n+1}{2k+1} (-1)^k x^{n-2k}
\end{equation}
Replacing $n$ by $n-1$ and using theorem \ref{tander} gives this theorem.
\end{proof}
\begin{theorem}\label{dtanhcheb}
For integer $n\geq 1$:
\begin{equation}
 D_x^n \arctanh(x) = \frac{(-1)^{n+1}(n-1)!i^{n-1}}{(1-x^2)^{\frac{n+1}{2}}} U_{n-1}(\frac{ix}{\sqrt{1-x^2}})
\end{equation}
\end{theorem}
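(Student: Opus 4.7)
The plan is to reduce this statement to the already established Theorem \ref{dtancheb} by exploiting the identity $\arctanh(x) = -i\,\arctan(ix)$, which was already invoked in the proof of the $\arccoth$ derivative theorem. Differentiating both sides $n$ times and applying the chain rule yields
\begin{equation*}
 D_x^n \arctanh(x) = -i^{n+1}\, \bigl(D_y^n \arctan(y)\bigr)\big|_{y=ix}.
\end{equation*}
This step is purely formal and costs nothing beyond a single application of the chain rule.

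Next I would substitute $y=ix$ into Theorem \ref{dtancheb}. The key observations are the two compatible algebraic identities $1+y^2 = 1-x^2$ and $y/\sqrt{1+y^2} = ix/\sqrt{1-x^2}$, which immediately transplant the denominator and the argument of $U_{n-1}$ into the form claimed in the statement. What remains is to collect the prefactor $-i^{n+1}(-1)^{n+1}$ coming from the chain rule and from Theorem \ref{dtancheb}, and check that it equals the $(-1)^{n+1} i^{n-1}$ appearing in the target formula. This follows from $-1 = i^2$ and $i^4 = 1$, since $-i^{n+1} = i^{n+3} = i^{n-1}$, so $-i^{n+1}(-1)^{n+1} = (-1)^{n+1} i^{n-1}$.

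The only obstacle is the bookkeeping of the $i$ and $(-1)$ factors, which is slightly delicate because $n$ can have arbitrary parity; a quick sanity check at $n=1,2$ confirms the signs. An equally viable alternative would be to mimic the proof of Theorem \ref{dtancheb} directly: substitute $x \mapsto ix/\sqrt{1-x^2}$ into the explicit polynomial form of $U_{n-1}$ from Theorem \ref{unpoly}, use the clean simplification $(ix/\sqrt{1-x^2})^2 - 1 = -1/(1-x^2)$, and match the resulting sum term by term with the expression for $D_x^n \arctanh(x)$ given by equation (\ref{arctanhres}). In that route the collapse $(-1)^k i^{n-1-2k} = i^{n-1}$ (independent of $k$) is what makes the sum factor cleanly. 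Either way, the computation is short and purely bookkeeping; no new analytic input is required beyond the theorems already proved.
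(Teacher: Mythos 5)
Your proposal is correct and follows essentially the same route as the paper: both reduce the claim to Theorem \ref{dtancheb} via $\arctanh(x) = -i\arctan(ix)$, apply the chain rule to get the factor $-i^{n+1} = i^{n-1}$, and substitute $y=ix$ into the formula for $D_y^n\arctan(y)$. Nothing further is needed.
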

\begin{proof}
From the definitions:
\begin{equation}
 \arctan(x) = - \frac{i}{2} \ln( \frac{1+ix}{1-ix} )
\end{equation}
\begin{equation}
 \arctanh(x) = \frac{1}{2} \ln( \frac{1+x}{1-x} )
\end{equation}
follows:
\begin{equation}
 \arctanh(x) = - i \arctan(ix)
\end{equation}
Let:
\begin{equation}
 D_x^n \arctan(x) = F(x)
\end{equation}
Then:
\begin{equation}
 D_x^n \arctanh(x) = - i D_x^n \arctan(ix) = - i^{n+1} F(ix) = i^{n-1} F(ix)
\end{equation}
which with the previous theorem gives this result.
\end{proof}

\section{A New Proof of Formula (\ref{adegoke})}

In addition to the proofs in literature \cite{AL16,VL10,S50}, a new different proof is provided for formula (\ref{adegoke}):
\begin{theorem}\label{dtansin}
For integer $n\geq 1$:
\begin{equation}
 D_x^n \arctan(x) = \frac{(-1)^{n+1}(n-1)!\sg^{n-1}(x)}{(1+x^2)^{n/2}} \sin(n \arcsin(\frac{1}{\sqrt{1+x^2}}))
\end{equation}
where $\sg(x)$ is given by definition \ref{sgdef}.
\end{theorem}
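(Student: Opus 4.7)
The plan is to start from the complex expression already obtained in the proof of Theorem \ref{tander}, namely
\[
 D_x^n \arctan(x) = (-1)^n (n-1)! \frac{i}{2} \left[ \frac{1}{(x-i)^n} - \frac{1}{(x+i)^n} \right],
\]
and evaluate the bracket using polar coordinates rather than the binomial theorem. Writing $x+i = r e^{i\phi}$ with $r = \sqrt{1+x^2}$ and $\phi = \arg(x+i) \in (0,\pi)$, one has $x - i = r e^{-i\phi}$, so that
\[
 \frac{1}{(x-i)^n} - \frac{1}{(x+i)^n} = \frac{e^{in\phi}-e^{-in\phi}}{(1+x^2)^{n/2}} = \frac{2i \sin(n\phi)}{(1+x^2)^{n/2}}.
\]
Substituting back gives
\[
 D_x^n \arctan(x) = \frac{(-1)^{n+1}(n-1)!\,\sin(n\phi)}{(1+x^2)^{n/2}},
\]
so the theorem reduces to the trigonometric identity $\sin(n\phi) = \sg^{n-1}(x)\,\sin\!\bigl(n\arcsin(1/\sqrt{1+x^2})\bigr)$.

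To establish this identity I would split on the sign of $x$. By construction $\sin\phi = 1/\sqrt{1+x^2} > 0$ and $\cos\phi = x/\sqrt{1+x^2}$. For $x\geq 0$ we have $\phi\in [0,\pi/2]$ and therefore $\phi = \arcsin(1/\sqrt{1+x^2})$ directly, which matches the theorem since $\sg^{n-1}(x) = 1$. For $x < 0$ we have $\phi\in (\pi/2,\pi)$, so $\phi = \pi - \arcsin(1/\sqrt{1+x^2})$. Setting $\alpha = \arcsin(1/\sqrt{1+x^2})$, the identity $\sin(n\pi - n\alpha) = -\cos(n\pi)\sin(n\alpha) = (-1)^{n+1}\sin(n\alpha)$ produces exactly the factor $\sg^{n-1}(x) = (-1)^{n-1} = (-1)^{n+1}$ needed for negative $x$.

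The main obstacle is the branch bookkeeping for $x<0$: one must not naively identify $\arg(x+i)$ with $\arcsin(1/\sqrt{1+x^2})$ since the principal value of $\arcsin$ lives in $[-\pi/2,\pi/2]$ while $\arg(x+i)$ lives in $(0,\pi)$. The supplementary-angle shift is precisely what manufactures the $\sg^{n-1}(x)$ prefactor, and this is the conceptual point that distinguishes this proof from the literature proofs cited. Everything else is a routine application of Euler's formula and the closed form from Theorem \ref{tander}.
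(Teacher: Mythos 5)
Your proposal is correct, but it proves the theorem by a genuinely different route than the paper. The paper's proof is by induction on $n$: it verifies the case $n=1$, then shows via the product rule that differentiating the claimed right-hand side for $n$ produces the claimed right-hand side for $n+1$, matching the two sides through the addition formula for $\sin(\alpha+\beta)$ and the identity $\sg(x)|x|=x$; the point $x=0$ is then checked separately against Theorem \ref{tander}. You instead reuse the exact partial-fraction expression
\begin{equation*}
 D_x^n \arctan(x) = (-1)^n (n-1)! \tfrac{i}{2}\left[ (x-i)^{-n} - (x+i)^{-n} \right]
\end{equation*}
already established inside the proof of Theorem \ref{tander}, and evaluate it in polar form, which immediately yields $(-1)^{n+1}(n-1)!\sin(n\phi)/(1+x^2)^{n/2}$ with $\phi=\arg(x+i)$; the whole content of the theorem then collapses to the elementary branch identity $\sin(n\phi)=\sg^{n-1}(x)\sin(n\arcsin(1/\sqrt{1+x^2}))$, which you verify correctly by splitting on the sign of $x$ and using the supplementary-angle relation $\phi=\pi-\arcsin(1/\sqrt{1+x^2})$ for $x<0$. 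Your approach buys a derivation rather than a verification --- it explains structurally where the $\sg^{n-1}(x)$ factor comes from (the mismatch between the range of $\arg(x+i)$ and the principal branch of $\arcsin$), it handles $x=0$ uniformly rather than as a separate limiting check, and combined with Theorem \ref{tander} it essentially re-proves Theorem \ref{sinarcsin} as a corollary. The paper's induction, by contrast, is self-contained within real analysis and does not lean on the complex logarithm decomposition, but it requires the formula to be guessed in advance. One cosmetic quibble: for $x\geq 0$ you should write $\phi\in(0,\pi/2]$ rather than $[0,\pi/2]$, since $\sin\phi=1/\sqrt{1+x^2}>0$ excludes $\phi=0$; this does not affect the argument.
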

\begin{proof}
First the theorem is proved for $x\neq 0$.
The theorem is true for $n=1$, and using induction on $n$, when the theorem is true for $n$,
it is true for $n+1$ when:
\begin{equation}\label{induc}
 D_x  \frac{\sin(n\arcsin(\frac{1}{\sqrt{1+x^2}}))}{(1+x^2)^{\frac{n}{2}}} = \frac{-n\,\sg(x)\sin((n+1)\arcsin(\frac{1}{\sqrt{1+x^2}}))}{(1+x^2)^{\frac{n+1}{2}}}
\end{equation}
The left side of (\ref{induc}) is evaluated with the product rule for differentiation,
and the right side of (\ref{induc}) with a trigonometric identity.
For the left side, using:
\begin{equation}
 D_x (1+x^2)^{-\frac{n}{2}} = -n x (1+x^2)^{-\frac{n}{2}-1}
\end{equation}
\begin{equation}
 D_x \arcsin(x) = \frac{1}{\sqrt{1-x^2}}
\end{equation}
\begin{equation}
 D_x \arcsin(\frac{1}{\sqrt{1+x^2}}) = - \frac{\sqrt{1+x^2}}{|x|} x (1+x^2)^{-\frac{3}{2}} = - \frac{\sg(x)}{1+x^2}
\end{equation}
the left side yields:
\begin{equation}
 \frac{-n}{(1+x^2)^{\frac{n}{2}+1}} [ x \sin(n\arcsin(\frac{1}{\sqrt{1+x^2}})) + \sg(x) \cos(n\arcsin(\frac{1}{\sqrt{1+x^2}})) ]
\end{equation}
The right side is evaluated with the trigonometric identity:
\begin{equation}
 \sin(\alpha+\beta) = \sin(\alpha)\cos(\beta) + \cos(\alpha)\sin(\beta)
\end{equation}
Using:
\begin{equation}
 \sin(\arcsin(\frac{1}{\sqrt{1+x^2}})) = \frac{1}{\sqrt{1+x^2}}
\end{equation}
\begin{equation}
 \cos(\arcsin(x)) = \sqrt{1-x^2}
\end{equation}
\begin{equation}
 \cos(\arcsin(\frac{1}{\sqrt{1+x^2}})) = \frac{|x|}{\sqrt{1+x^2}}
\end{equation}
the right side yields:
\begin{equation}
 \frac{-n\,\sg(x)}{(1+x^2)^{\frac{n}{2}+1}} [ |x| \sin(n\arcsin(\frac{1}{\sqrt{1+x^2}})) + \cos(n\arcsin(\frac{1}{\sqrt{1+x^2}})) ]
\end{equation}
Because $\sg(x)|x|=x$ both sides are identical and the theorem is proved for $x\neq 0$.
For $x=0$ because $\sg(0)=1$ as in literature \cite{VL10} this theorem yields:
\begin{equation}
 D_x^n \arctan(x)|_{x=0} = (-1)^{n+1} (n-1)! \sin(n\frac{\pi}{2}) = 
 \begin{cases}
   (n-1)! (-1)^{\frac{n-1}{2}} & \text{if $n$ is odd} \\
   0 & \text{if $n$ is even} \\
 \end{cases}
\end{equation}
which is in agreement with theorem \ref{tander}.
\end{proof}

\section{The Function sg(x) for Complex x}

Let a complex $x$ be defined by:
\begin{equation}
 x = r e^{i\phi}
\end{equation}
where $r=|x|$ is its absolute value and $\phi=\arg(x)$ its angle with respect to the positive
real axis in the complex plane, and where $-\pi<\phi\leq\pi$.
Then the principal square root of a complex $x$ is defined by:
\begin{equation}
 \sqrt{x} = \sqrt{r}\, e^{i\phi/2}
\end{equation}
The theorems above lead to the following definition \cite{wiki}:
\begin{definition}
For complex $x$, let $\sqrt{x}$ be the principal square root of $x$, then:
\begin{equation}
 \sg(x) = 
 \begin{cases}
   \frac{\sqrt{x^2}}{x} = \frac{x}{\sqrt{x^2}} & \text{\rm if $x\neq 0$} \\
   1 & \text{\rm if $x=0$} \\
 \end{cases}
\end{equation}
The result of this definition is:
\begin{equation}
 \sg(x) = 
 \begin{cases}
   1 & \text{\rm if $\re(x) > 0$} \\
   -1 & \text{\rm if $\re(x) < 0$} \\
   1 & \text{\rm if $\re(x)=0$ and $\im(x)\geq 0$} \\
   -1 & \text{\rm if $\re(x)=0$ and $\im(x)< 0$} \\
 \end{cases}
\end{equation}
\end{definition}
For real $x$ this definition reduces to definition \ref{sgdef},
and from this definition follows that for real $x$: $\sg(ix)=\sg(x)$.
For complex $x$, $1/\sg(x)=\sg(x)$,
and for complex $x\neq 0$, $\sg(-x)=-\sg(x)$.
Replacing $x$ with $ix$, the following results for complex $x$:
\begin{equation}
 \sg(ix) = 
 \begin{cases}
   \frac{\sqrt{-x^2}}{ix} = \frac{ix}{\sqrt{-x^2}} & \text{\rm if $x\neq 0$} \\
   1 & \text{\rm if $x=0$} \\
 \end{cases}
\end{equation}
Replacing $x$ with $\sqrt{x}\sqrt{y}$ and using $(\sqrt{x})^2=x$, for complex $x$ and $y$:
\begin{equation}
 \sg(\sqrt{x}\sqrt{y}) = 
 \begin{cases}
   \frac{\sqrt{xy}}{\sqrt{x}\sqrt{y}} = \frac{\sqrt{x}\sqrt{y}}{\sqrt{xy}} & \text{\rm if $xy\neq 0$} \\
   1 & \text{\rm if $xy=0$} \\
 \end{cases}
\end{equation}
Taking $y=1$ it follows that for complex $x$: $\sg(\sqrt{x})=1$.
Replacing $x$ with $\sqrt{x}/\sqrt{y}$, for complex $x$ and $y\neq 0$:
\begin{equation}
 \sg(\frac{\sqrt{x}}{\sqrt{y}}) = 
 \begin{cases}
   \sqrt{\frac{x}{y}}\frac{\sqrt{y}}{\sqrt{x}} = \frac{\sqrt{x}}{\sqrt{y}}/\sqrt{\frac{x}{y}} & \text{\rm if $x\neq 0$} \\
   1 & \text{\rm if $x=0$} \\
 \end{cases}
\end{equation}
From these identities follows that for complex $x$:
$\sg(x)\sqrt{x^2}=x$, $\sg(x)x=\sqrt{x^2}$,
$\sg(ix)\sqrt{-x^2}=ix$, $\sg(ix)ix=\sqrt{-x^2}$,
and similarly for the other identities.\\
In \cite{K20} the following is proved for complex $x$:
\begin{equation}
 \arccot(x) + \arctan(x) = \frac{\pi}{2} \sg(x)
\end{equation}
This result is different from \cite{AS72} eq. 4.4.5 when $\re(x)=0$.\\
Substituting $\arctanh(x)=-i\arctan(ix)$ and $\arccoth(x)=i\,\arccot(ix)$:
\begin{equation}
 \arccoth(x) - \arctanh(x) = \frac{\pi}{2} i \, \sg(ix)
\end{equation}
The corresponding Mathematica\textsuperscript{\textregistered} \cite{W03} program:
\begin{alltt}
Sg[x_]:=If[Re[x]>0,1,If[Re[x]<0,-1,If[Im[x]>=0,1,-1]]]
\end{alltt}

\section{Examples}

\begin{equation}
 D_x \tan(x) = 1 + \tan^2(x)
\end{equation}
\begin{equation}
 D_x^2 \tan(x) = 2 \tan(x) + 2 \tan^3(x)
\end{equation}
\begin{equation}
 D_x^3 \tan(x) = 2 + 8 \tan^2(x) + 6 \tan^4(x)
\end{equation}
\begin{equation}
 D_x^4 \tan(x) = 16 \tan(x) + 40 \tan^3(x) + 24 \tan^5(x)
\end{equation}
\begin{equation}
 D_x^5 \tan(x) = 16 + 136 \tan^2(x) + 240 \tan^4(x) + 120 \tan^6(x)
\end{equation}
\begin{equation}
 D_x^6 \tan(x) = 272 \tan(x) + 1232 \tan^3(x) + 1680 \tan^5(x) + 720 \tan^7(x)
\end{equation}
\begin{equation}
 D_x \cot(x) = -1 - \cot^2(x)
\end{equation}
\begin{equation}
 D_x^2 \cot(x) = 2 \cot(x) + 2 \cot^3(x)
\end{equation}
\begin{equation}
 D_x^3 \cot(x) = -2 - 8 \cot^2(x) - 6 \cot^4(x)
\end{equation}
\begin{equation}
 D_x^4 \cot(x) = 16 \cot(x) + 40 \cot^3(x) + 24 \cot^5(x)
\end{equation}
\begin{equation}
 D_x^5 \cot(x) = -16 - 136 \cot^2(x) - 240 \cot^4(x) - 120 \cot^6(x)
\end{equation}
\begin{equation}
 D_x^6 \cot(x) = 272 \cot(x) + 1232 \cot^3(x) + 1680 \cot^5(x) + 720 \cot^7(x)
\end{equation}
\begin{equation}
 D_x \tanh(x) = 1 - \tanh^2(x)
\end{equation}
\begin{equation}
 D_x^2 \tanh(x) = -2 \tanh(x) + 2 \tanh^3(x)
\end{equation}
\begin{equation}
 D_x^3 \tanh(x) = -2 + 8 \tanh^2(x) - 6 \tanh^4(x)
\end{equation}
\begin{equation}
 D_x^4 \tanh(x) = 16 \tanh(x) - 40 \tanh^3(x) + 24 \tanh^5(x)
\end{equation}
\begin{equation}
 D_x^5 \tanh(x) = 16 - 136 \tanh^2(x) + 240 \tanh^4(x) - 120 \tanh^6(x)
\end{equation}
\begin{equation}
 D_x^6 \tanh(x) = -272 \tanh(x) + 1232 \tanh^3(x) - 1680 \tanh^5(x) + 720 \tanh^7(x)
\end{equation}
\begin{equation}
 D_x \coth(x) = 1 - \coth^2(x)
\end{equation}
\begin{equation}
 D_x^2 \coth(x) = -2 \coth(x) + 2 \coth^3(x)
\end{equation}
\begin{equation}
 D_x^3 \coth(x) = -2 + 8 \coth^2(x) - 6 \coth^4(x)
\end{equation}
\begin{equation}
 D_x^4 \coth(x) = 16 \coth(x) - 40 \coth^3(x) + 24 \coth^5(x)
\end{equation}
\begin{equation}
 D_x^5 \coth(x) = 16 - 136 \coth^2(x) + 240 \coth^4(x) - 120 \coth^6(x)
\end{equation}
\begin{equation}
 D_x^6 \coth(x) = -272 \coth(x) + 1232 \coth^3(x) - 1680 \coth^5(x) + 720 \coth^7(x)
\end{equation}
\begin{equation}
 D_x \arctan(x) = \frac{1}{1+x^2}
\end{equation}
\begin{equation}
 D_x^2 \arctan(x) = \frac{-2x}{(1+x^2)^2}
\end{equation}
\begin{equation}
 D_x^3 \arctan(x) = \frac{-2 ( 1 - 3x^2 )}{(1+x^2)^3}
\end{equation}
\begin{equation}
 D_x^4 \arctan(x) = \frac{24 x ( 1 - x^2 )}{(1+x^2)^4}
\end{equation}
\begin{equation}
 D_x^5 \arctan(x) = \frac{ 24 ( 1 - 10x^2 + 5x^4 )}{(1+x^2)^5}
\end{equation}
\begin{equation}
 D_x^6 \arctan(x) = \frac{ -240 x ( 3 - 10x^2 + 3x^4 )}{(1+x^2)^6}
\end{equation}
\begin{equation}
 D_x \arccot(x) = \frac{-1}{1+x^2}
\end{equation}
\begin{equation}
 D_x^2 \arccot(x) = \frac{2x}{(1+x^2)^2}
\end{equation}
\begin{equation}
 D_x^3 \arccot(x) = \frac{2 ( 1 - 3x^2 )}{(1+x^2)^3}
\end{equation}
\begin{equation}
 D_x^4 \arccot(x) = \frac{-24 x ( 1 - x^2 )}{(1+x^2)^4}
\end{equation}
\begin{equation}
 D_x^5 \arccot(x) = \frac{ -24 ( 1 - 10x^2 + 5x^4 )}{(1+x^2)^5}
\end{equation}
\begin{equation}
 D_x^6 \arccot(x) = \frac{ 240 x ( 3 - 10x^2 + 3x^4 )}{(1+x^2)^6}
\end{equation}
\begin{equation}
 D_x \arctanh(x) = \frac{1}{1-x^2}
\end{equation}
\begin{equation}
 D_x^2 \arctanh(x) = \frac{2x}{(1-x^2)^2}
\end{equation}
\begin{equation}
 D_x^3 \arctanh(x) = \frac{2(1+3x^2)}{(1-x^2)^3}
\end{equation}
\begin{equation}
 D_x^4 \arctanh(x) = \frac{24x(1+x^2)}{(1-x^2)^4}
\end{equation}
\begin{equation}
 D_x^5 \arctanh(x) = \frac{24(1+10x^2+5x^4)}{(1-x^2)^5}
\end{equation}
\begin{equation}
 D_x^6 \arctanh(x) = \frac{240x(3+10x^2+3x^4)}{(1-x^2)^6}
\end{equation}
\begin{equation}
 D_x \arccoth(x) = \frac{1}{1-x^2}
\end{equation}
\begin{equation}
 D_x^2 \arccoth(x) = \frac{2x}{(1-x^2)^2}
\end{equation}
\begin{equation}
 D_x^3 \arccoth(x) = \frac{2(1+3x^2)}{(1-x^2)^3}
\end{equation}
\begin{equation}
 D_x^4 \arccoth(x) = \frac{24x(1+x^2)}{(1-x^2)^4}
\end{equation}
\begin{equation}
 D_x^5 \arccoth(x) = \frac{24(1+10x^2+5x^4)}{(1-x^2)^5}
\end{equation}
\begin{equation}
 D_x^6 \arccoth(x) = \frac{240x(3+10x^2+3x^4)}{(1-x^2)^6}
\end{equation}

\pdfbookmark[0]{References}{}

\end{document}